\newcommand{\qed}{\mbox{$\Box$}\vspace{\baselineskip}}
\newenvironment{proof}
{\noindent {\bf Proof:}}{{\qed}}
\newenvironment{proof_special}
{\noindent {\bf Proof:}}{}
\newenvironment{proof_}[1]
{\noindent {\bf#1:}}{{\qed}}
\newtheorem{theorem}{Theorem}[section]
\newtheorem{proposition}[theorem]{Proposition}
\newtheorem{definition}[theorem]{Definition}
\newtheorem{corollary}[theorem]{Corollary}
\newcommand{\av}{{\bf a}}
\newcommand{\bv}{{\bf b}}
\newcommand{\ab}{\av\bv}
\newcommand{\rv}{{\bf r}}
\begin{document}

\title{Enumerative properties of Ferrers graphs}

\author{{\sc Richard Ehrenborg\thanks{Partially
supported by National Science Foundation grant
0200624.}}
           $\:\:$ and $\:\:$
        {\sc Stephanie van
Willigenburg\thanks{Partially supported by the
National Sciences
and Engineering  Research Council of Canada.}}}

\date{{\it To Lou Billera and Andr\'e Joyal on their
           $3 \cdot 4 \cdot 5$th birthdays}}

\maketitle

\begin{abstract}
We define a class of bipartite graphs that correspond
naturally
with Ferrers diagrams. We give expressions for the
number of
spanning trees, the number of Hamiltonian paths when
applicable,
the chromatic polynomial, and the chromatic symmetric
function. We
show that the linear coefficient of the chromatic
polynomial is
given by the excedance set statistic.
\end{abstract}

\section{Introduction and preliminaries}

Geometric and algebraic combinatorics span many areas, from
the geometry of hyperplane
arrangements~\cite{Billera_Brown_Diaconis,Billera_Filliman_Sturmfels},
through graph
theory~\cite{Billera_Chan_Liu,Greene_Zaslavsky}, to
the
more algebraic permutation
statistics~\cite{Billera_Sarangarajan,Ehrenborg_Steingrimsson,Ehrenborg_Steingrimsson_2}.
An important aspect of all these areas is enumeration,
which often
illuminates the finer structure of the object under
investigation, be
it computing the faces of a
polytope~\cite{Bayer_Billera,Billera_Ehrenborg,Billera_Ehrenborg_Readdy}
or the distribution of permutations satisfying certain
criteria~\cite{Billera_Hsiao_van_Willigenburg}.

In this paper we unite these facets of combinatorics via the study
of Ferrers graphs, and in particular answer some of the more
pertinent questions concerning enumeration. More precisely, we
define a class of bipartite graphs that we call Ferrers graphs, so
called since the edges in the graphs are in direct correspondence
with the boxes in a Ferrers diagram.  First, we calculate the
number of spanning trees. The technique we use to prove this
utilizes electrical networks. In fact, the first reference to
spanning trees is in an article by Kirchhoff~\cite{Kirchhoff},
thus the study of trees and the study of electrical networks share
their origin in the work of Kirchhoff. Second, when the two parts
in the vertex partition have the same cardinality we determine the
number of Hamiltonian paths in the Ferrers graph. This result is
based upon the previous result and the proof is inspired by
Joyal's proof of Cayley's formula. Third, and most mysterious, we
prove that the linear coefficient of the chromatic polynomial of a
Ferrers graph is given by the excedance set statistic of
permutations. Lastly, we compute the chromatic symmetric function,
thus generating a family of symmetric functions arising from
Ferrers diagrams other than Schur functions. It should be noted
that our Ferrers graphs are not those appearing
in~\cite{Frumkin_James_Roichman}.

\begin{definition}
Define a \emph{Ferrers graph} to be a bipartite graph on the
vertex partition $U = \{u_{0}, \ldots, u_{n}\}$ and $V = \{v_{0},
\ldots, v_{m}\}$ such that
\begin{itemize}
\item if $(u_{i},v_{j})$ is an edge then so is
$(u_{p},v_{q})$ for $0 \leq p \leq i$ and $0 \leq q
\leq j$.

\item $(u_{0}, v_{m})$ and $(u_{n}, v_{0})$ are edges.
\end{itemize}
\end{definition}

For a Ferrers graph $G$ we have the associated
partition $\lambda =
(\lambda_{0}, \lambda_{1}, \ldots, \lambda_{n})$,
where $\lambda_{i}$
is the degree of the vertex $u_{i}$.  Similarly, we
have the dual
partition $\lambda^{\prime} = (\lambda^{\prime}_{0},
\lambda^{\prime}_{1}, \ldots, \lambda^{\prime}_{m})$,
where
$\lambda^{\prime}_{j}$ is the degree of the vertex
$v_{j}$. The
associated Ferrers diagram is the diagram of boxes
where we have a box
in position $(i,j)$ if and only if $(u_{i},v_{j})$ is
an edge in the
Ferrers graph.

\begin{figure}
\begin{center}
\setlength{\unitlength}{0.7mm}
\begin{picture}(100,40)(0,10)

   \put(20,20){\circle*{2}}
   \put(40,20){\circle*{2}}
   \put(60,20){\circle*{2}}
   \put(80,20){\circle*{2}}
   \put(20,40){\circle*{2}}
   \put(40,40){\circle*{2}}
   \put(60,40){\circle*{2}}

   \put(20,40){\line(0,-1){20}}
   \put(20,40){\line(1,-1){20}}
   \put(20,40){\line(2,-1){40}}
   \put(20,40){\line(3,-1){60}}

   \put(40,40){\line(-1,-1){20}}
   \put(40,40){\line(0,-1){20}}
   \put(40,40){\line(1,-1){20}}
   \put(40,40){\line(2,-1){40}}

   \put(60,40){\line(-2,-1){40}}
   \put(60,40){\line(-1,-1){20}}

   \put(18,44){${u_{0}}$}
   \put(38,44){${u_{1}}$}
   \put(58,44){${u_{2}}$}

   \put(18,14){${v_{0}}$}
   \put(38,14){${v_{1}}$}
   \put(58,14){${v_{2}}$}
   \put(78,14){${v_{3}}$}

\end{picture}
\begin{picture}(100,40)(-30,-5)

   \put(0,0){\line(1,0){20}}
   \put(0,10){\line(1,0){40}}
   \put(0,20){\line(1,0){40}}
   \put(0,30){\line(1,0){40}}

   \put(0,30){\line(0,-1){30}}
   \put(10,30){\line(0,-1){30}}
   \put(20,30){\line(0,-1){30}}
   \put(30,30){\line(0,-1){20}}
   \put(40,30){\line(0,-1){20}}

   \put(13,-5){${\mathbf b}$}
   \put(21,3){${\mathbf a}$}
   \put(24,5){${\mathbf b}$}
   \put(33,5){${\mathbf b}$}
   \put(41,13){${\mathbf a}$}

\end{picture}
\end{center}
\caption{The Ferrers graph and the Ferrers diagram
associated with
the partition $(4,4,2)$, the dual partition
$(3,3,2,2)$ and the
$\ab$-word $\bv \av \bv \bv \av$.}
\label{figure_babba}
\end{figure}
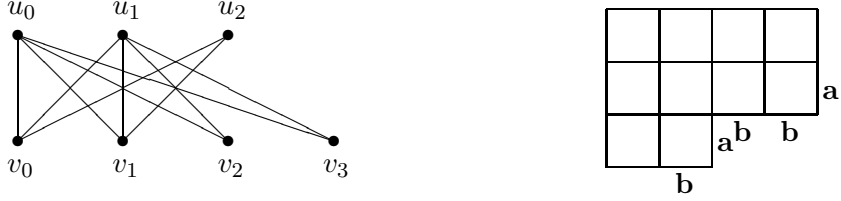

There is another natural way to index Ferrers graphs.
Consider the
Ferrers diagram associated with the graph. Walk along
the path on
the border of the Ferrers diagram starting at the
lower right hand
corner of the box indexed by $(n,0)$ and ending at the
lower right
hand corner of the box indexed by $(0,m)$. Label a
horizontal step
by $\bv$ and a vertical step by $\av$. It is
straightforward to
see that the $\ab$-words obtained this way are in one
to one
correspondence with Ferrers graphs. This is
essentially the same
encoding as in Exercise~7.59 in~\cite{Stanley_EC_II}.
See
Figure~\ref{figure_babba} for an example of a Ferrers
graph, its
Ferrers diagram, partition, dual partition and
$\ab$-word.

\section{The number of spanning trees}
\setcounter{equation}{0}

For a spanning tree $T$ of a Ferrers graph $G$ define
the weight
$\sigma(T)$ to be
$$      \sigma(T)
   =
        \prod_{p=0}^{n} x_{p}^{\deg_{T}(u_{p})}
      \cdot
        \prod_{q=0}^{m} y_{q}^{\deg_{T}(v_{q})}   .
$$
For a Ferrers graph $G$ define $\Sigma(G)$ to be the
sum
$\Sigma(G)
   =
 \sum_{T}   \sigma(T)$,
where $T$ ranges over all spanning trees $T$ of the
Ferrers graph
$G$. Also let $\tau(G)$ denote the number of spanning
trees of the
graph $G$, that is, $\tau(G)
  =
 \Sigma(G)\vrule_{x_{0} = \cdots = x_{n} = y_{0} =
\cdots = y_{m} = 1}$.

\begin{theorem}
Let $G$ be the Ferrers graph corresponding to the
partition
$\lambda$ and the dual partition $\lambda^{\prime}$.
Then the sum
of the weights of spanning trees $T$ of the Ferrers
graph $G$ is
given by
$$  \Sigma(G)
       =
    x_{0} \cdots x_{n} \cdot y_{0} \cdots y_{m} \cdot
    \prod_{p=1}^{n} (y_{0} + \cdots +
y_{\lambda_{p}-1})
  \cdot
    \prod_{q=1}^{m} (x_{0} + \cdots +
x_{\lambda^{\prime}_{q}-1})
 .   $$
Hence the number of spanning trees of $G$ is given by
$$  \tau(G)
       =
    \prod_{p=1}^{n} \lambda_{p}
   \cdot
    \prod_{q=1}^{m} \lambda^{\prime}_{q}   .   $$
\label{theorem_spanning_trees}
\end{theorem}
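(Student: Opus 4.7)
The plan is to invoke the weighted Matrix-Tree Theorem. Assigning to each edge $(u_p, v_q)$ the weight $w(u_p, v_q) = x_p y_q$, a spanning tree $T$ acquires total weight $\prod_{(u_p, v_q) \in T} x_p y_q = \prod_p x_p^{\deg_T u_p} \prod_q y_q^{\deg_T v_q} = \sigma(T)$. Hence by Kirchhoff's theorem $\Sigma(G)$ equals any principal cofactor of the weighted Laplacian $L$, which has diagonals $L_{u_p, u_p} = x_p(y_0 + \cdots + y_{\lambda_p - 1})$, $L_{v_q, v_q} = y_q(x_0 + \cdots + x_{\lambda'_q - 1})$, and off-diagonals $L_{u_p, v_q} = -x_p y_q$ on edges. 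I would delete the row and column indexed by $u_0$ (a vertex of maximum degree, since $\lambda_0 = m+1$) to obtain the reduced matrix $L'$.

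Next I would evaluate $\det L'$ by two sequences of elementary operations tailored to the Ferrers structure. First, for $q = 0, 1, \ldots, m-1$ in order, replace column $v_q$ by $(\text{col } v_q) - (y_q/y_{q+1})(\text{col } v_{q+1})$. Because $\lambda_0 \geq \lambda_1 \geq \cdots$, this cancels all entries in the $V$-columns of every $U$-row except the single entry $-x_p y_{\lambda_p - 1}$ in column $v_{\lambda_p - 1}$, while also making the $V \times V$ block lower bidiagonal with diagonal $y_q s_q$ and subdiagonal $-y_{q-1} s_q$. Second, use the now diagonal $U \times U$ block (with diagonal $x_p r_p$, where $r_p = y_0 + \cdots + y_{\lambda_p - 1}$) to annihilate the $V \times U$ block by row operations $(\text{row } v_q) \mapsto (\text{row } v_q) + (y_q/r_p)(\text{row } u_p)$. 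The resulting matrix is block upper triangular, giving
$$\det L' = \prod_{p=1}^n (x_p r_p) \cdot \det M$$
for the modified $V \times V$ block $M$.

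The remaining task is to show $\det M = x_0 \prod_{q=0}^m y_q \prod_{q=1}^m s_q$, where $s_q = x_0 + \cdots + x_{\lambda'_q - 1}$. The matrix $M$ inherits its own nested structure from the dual partition $\lambda'$, so I would evaluate its determinant by the analogous sequence of row and column operations on the $\lambda'$-side, or alternatively by induction on $m$. Multiplying the two factors yields the formula for $\Sigma(G)$; specializing all variables to $1$ immediately gives $\tau(G) = \prod \lambda_p \prod \lambda'_q$. The main obstacle is the bookkeeping for $\det M$: after the first round of reductions its entries combine the original diagonal with Schur-complement corrections involving rational expressions such as $(s_{q'} - s_{q'+1})/(y_0 + \cdots + y_{q'})$, and one must show that these collapse under the symmetric $\lambda'$-side operations. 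The $\lambda \leftrightarrow \lambda'$ symmetry of the theorem's statement strongly suggests that such a symmetric completion is available, and small cases ($\lambda = (2,2)$, $\lambda = (3,3)$) confirm that the hoped-for factorization does emerge after the dual reduction.
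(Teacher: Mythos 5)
Your overall strategy --- the weighted Matrix--Tree theorem with edge weights $x_p y_q$ followed by elimination on the reduced Laplacian --- is a legitimate route and genuinely different from the paper's, and your setup is sound: $\sigma(T)$ is indeed $\prod_{e\in T}w_e$, the Laplacian entries are as you state, and your target $\det L' = \prod_{p=1}^n (x_p r_p)\cdot x_0\prod_{q=0}^m y_q \prod_{q=1}^m s_q$ multiplies out to the claimed formula. The column operations do clear each $U$-row down to the single entry $-x_p y_{\lambda_p-1}$, the $V\times V$ block does become lower bidiagonal, and the Schur-complement factorization is standard.

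The difficulty is that the argument stops exactly where the real work begins. Everything up to the formation of $M$ is routine linear algebra that would apply to any bipartite graph; the entire content of the theorem is the identity $\det M = x_0\prod_{q=0}^m y_q\prod_{q=1}^m s_q$, and for that you offer only the expectation that the corrections ``collapse'' by $\lambda\leftrightarrow\lambda'$ symmetry, plus checks of two small cases. But $M$ is not the reduced Laplacian of a smaller Ferrers graph: its upper-triangular part carries rational corrections of the form $-y_{q'}y_q\,(s_q-s_{q+1})/(y_0+\cdots+y_q)$ sitting on top of the bidiagonal part, so ``the analogous sequence of operations on the $\lambda'$-side'' is not available verbatim, and the symmetry of the final answer does not by itself produce a symmetric intermediate computation. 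As written this is a plan with a hole at its center rather than a proof. For contrast, the paper avoids any global determinant evaluation: Proposition~\ref{proposition_Sigma_Sigma} shows that adding a single edge $(u_i,v_j)$ multiplies $\Sigma$ by an explicit ratio, proved by exhibiting a current distribution satisfying Kirchhoff's laws on the augmented network, and the theorem follows by building the Ferrers graph edge by edge from the minimal tree. Each step there is a local verification --- precisely what your approach is missing for $\det M$. If you want to complete your route, the most promising fix is to evaluate $\det L'$ itself by induction on the number of boxes in the diagram, tracking how the determinant changes when one edge is added; at that point you will have essentially reconstructed the paper's argument in Laplacian language.
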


Using the theory of electrical networks, originating with
Kirchhoff~\cite{Kirchhoff} (for a more accessible reference
see~\cite{Bollobas}), we can deduce the following:

\begin{proposition}
Let $H$ be a Ferrers graph and let $G$ be the Ferrers
graph
obtained from $H$ by adding the edge $(u_{i}, v_{j})$,
where $i,j
\geq 1$. Then the ratio between $\Sigma(G)$ and
$\Sigma(H)$ is
given by
$$   \frac{\Sigma(G)}{\Sigma(H)}
   =
     \frac{x_{0} + \cdots  + x_{i-1} + x_{i}}{x_{0} +
\cdots + x_{i-1}}
   \cdot
     \frac{y_{0} + \cdots  + y_{j-1} + y_{j}}{y_{0} +
\cdots + y_{j-1}}
 .   $$
\label{proposition_Sigma_Sigma}
\end{proposition}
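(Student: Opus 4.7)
The plan is to interpret $\Sigma$ as the weighted spanning-tree polynomial of an electrical network on the Ferrers graph, where the edge $(u_p,v_q)$ carries conductance $c_{pq}=x_py_q$; then $\sigma(T)=\prod_{(u_p,v_q)\in T}c_{pq}$, and by the weighted Matrix--Tree theorem $\Sigma(H)=\det(L_H^\ast)$ for a principal cofactor $L_H^\ast$ of the Laplacian. The standard rank-one update identity, obtained from the matrix-determinant lemma (or equivalently from Kirchhoff's interpretation of effective resistance), gives
$$
\frac{\Sigma(G)}{\Sigma(H)} \;=\; 1 + x_iy_j\cdot R^H_{\mathrm{eff}}(u_i,v_j),
$$
so it suffices to show that $R^H_{\mathrm{eff}}(u_i,v_j)=(X_iY_j-X_{i-1}Y_{j-1})/(x_iy_jX_{i-1}Y_{j-1})$, where $X_k:=x_0+\cdots+x_k$ and $Y_k:=y_0+\cdots+y_k$; expanding $X_iY_j-X_{i-1}Y_{j-1}=x_iY_{j-1}+X_{i-1}y_j+x_iy_j$ then recovers the claimed ratio $X_iY_j/(X_{i-1}Y_{j-1})$.

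To compute the effective resistance I would solve for the harmonic function on $H$ sending $u_i\mapsto V$ and $v_j\mapsto 0$. The critical observation is that the factorisation $c_{pq}=x_py_q$ lets one divide Kirchhoff's current-balance equation at $v_q$ by the common factor $y_q$, and similarly balance at $u_p$ by $x_p$, so that each resulting equation is independent of the ``divided-out'' index. This suggests, and I would then verify, the piecewise-constant ansatz $V(u_p)=U$ for $p<i$, $V(v_q)=W$ for $q<j$, $V(u_{i'})=W$ for $i'>i$, and $V(v_{j'})=U$ for $j'>j$, for some scalars $U$ and $W$. The Ferrers condition that $(i,j)$ be addable to $H$ is exactly what makes this self-consistent: the subgraph on $\{u_0,\ldots,u_{i-1}\}\cup\{v_0,\ldots,v_{j-1}\}$ is forced to be complete bipartite, $u_i$ and $v_j$ have neighbourhoods exactly $\{v_0,\ldots,v_{j-1}\}$ and $\{u_0,\ldots,u_{i-1}\}$ respectively, and each outer $u_{i'}$ (resp.\ $v_{j'}$) has its neighbourhood contained in $\{v_0,\ldots,v_{j-1}\}$ (resp.\ in $\{u_0,\ldots,u_{i-1}\}$). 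Under the ansatz the outer-vertex balances are automatic, and the inner ones collapse to the $2\times 2$ system $X_iW=X_{i-1}U+x_iV$ together with $Y_jU=Y_{j-1}W$; solving it and computing the injected current $I=x_iY_{j-1}(V-W)$ delivers $R^H_{\mathrm{eff}}(u_i,v_j)=V/I$ in the required form.

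The main obstacle is discovering and justifying the piecewise-constant ansatz, since the conductances are not symmetric in the indices and so standard vertex-symmetry arguments for electrical networks do not apply directly. It is precisely the multiplicative factorisation $c_{pq}=x_py_q$ combined with the Ferrers property that conspires to make the harmonic potential constant on each of the four ``blocks'' of vertices; once this is in hand, the remaining algebra is entirely routine.
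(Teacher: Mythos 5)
Your proposal is correct and takes essentially the same route as the paper: the paper likewise treats the Ferrers graph as an electrical network with conductance $x_{p}y_{q}$ on the edge $(u_{p},v_{q})$, and its explicitly verified current assignment encodes exactly your piecewise-constant potential blocks (it observes that $u_{0},\ldots,u_{i-1}$ share a potential and that no current passes through $u_{i+1},\ldots,u_{n}$ or $v_{j+1},\ldots,v_{m}$). The only difference is packaging: the paper writes down the current in $G$ and uses the identity $w(u_{i},v_{j})=(\Sigma(G)-\Sigma(H))/\Sigma(G)$, whereas you solve for the harmonic potential in $H$ and invoke the equivalent rank-one-update formula $\Sigma(G)/\Sigma(H)=1+x_{i}y_{j}\,R^{H}_{\mathrm{eff}}(u_{i},v_{j})$.
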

\begin{proof}
Let $N$ be given by $(x_{0} + \cdots + x_{i}) \cdot
                     (y_{0} + \cdots + y_{j})$.
View the Ferrers graph as an electrical network where
the edge
$(u_{p},v_{q})$ is a resistor with resistance
$R(u_{p},v_{q}) =
(x_{p} y_{q})^{-1}$. Assign to each edge in the
Ferrers graph $G$
a current $w(u_{p}, v_{q})$ by the following rule:
$$      w(u_{p}, v_{q})
    =
        \left\{ \begin{array}{c c}
                   -x_{p} y_{q}/N & \mbox{ if } p < i,
q < j, \\
                    y_{q} \sum_{p=0}^{i-1} x_{p} /N &
\mbox{ if } p = i, q <
 j, \\ x_{p} \sum_{q=0}^{j-1} y_{q} /N & \mbox{ if } p
< i, q = j, \\ \left(
                    x_{i} y_{j} +
                    y_{j} \sum_{p=0}^{i-1} x_{p} +
                    x_{i} \sum_{q=0}^{j-1} y_{q}
\right)/N
           & \mbox{ if } p = i, q = j, \\
                         0       & \mbox{ otherwise.}
                \end{array} \right.    $$
Moreover, by Ohm's law we have the potential
difference $P(u_{p},
v_{q}) = R(u_{p},v_{p}) \cdot w(u_{p}, v_{q})$. It is
then
straightforward to verify that $w(u_{p}, v_{q})$ and
$P(u_{p},
v_{q})$ satisfy Kirchhoff's two laws when a current of
size $1$
enters the vertex $u_{i}$ and leaves at $v_{j}$. Also
observe that
the vertices $u_{0}, \ldots, u_{i-1}$ have the same
potential and
hence no current goes through vertices $v_{j+1},
\ldots, v_{m}$.
Similarly, there is no current through the vertices
$u_{i+1},
\ldots, u_{n}$. Hence the current through the edge
$(u_{i},
v_{j})$ is given by
\begin{eqnarray*}
   w(u_{i}, v_{j})
  & = &
\frac{
                    x_{i} y_{j} +
                    y_{j} \sum_{p=0}^{i-1} x_{p} +
                    x_{i} \sum_{q=0}^{j-1} y_{q}
}{N}  \\
  & = &
\frac{N
       -
        \left(\sum_{p=0}^{i-1} x_{p} \right)
           \cdot
        \left(\sum_{q=0}^{j-1} y_{q} \right)}{N}
.
\end{eqnarray*}
However, the current through the edge $(u_{i},v_{j})$
can also be
determined by the theory of electrical networks so
$$ w(u_{i}, v_{j})
       =
   \frac{{\displaystyle \sum_{(u_{i},v_{j}) \in T}
\prod_{e \in T}R(e)^{-1}}} {{\displaystyle \sum_{T}
\prod_{e \in
T} R(e)^{-1}}}
       =
   \frac{\Sigma(G) - \Sigma(H)}{\Sigma(G)}  ,  $$
where the sum in the denominator is over all spanning
trees $T$ of
the Ferrers graph $G$ and the sum in the numerator is
over all
spanning trees containing the edge $(u_{i},v_{j})$. By
combining
the last two identities the result follows.
\end{proof}

\begin{proof_}{Proof of
Theorem~\ref{theorem_spanning_trees}} The proof is by
induction on
the number of edges. The smallest Ferrers graph is the
tree with
$n+m+1$ edges where $(u_{i}, v_{j})$ is an edge if and
only if $i
\cdot j = 0$. This tree has weight $x_{0} \cdots x_{n}
\cdot y_{0}
\cdots y_{m} \cdot x_{0}^{m} \cdot y_{0}^{n}$. The
induction step
adds one edge at a time, and the result follows from
Proposition~\ref{proposition_Sigma_Sigma}.
\end{proof_}

As a corollary of Theorem~\ref{theorem_spanning_trees} we obtain
the classical result for the complete bipartite graphs. For the
history and different approaches of this corollary, see
Exercise~5.30 in~\cite{Stanley_EC_II}.
\begin{corollary}
For the complete bipartite graph $K_{n+1,m+1}$ the sum of the
weights of spanning trees $T$ is given by
$$  \Sigma(K_{n+1,m+1})
       =
    x_{0} \cdots x_{n} \cdot y_{0} \cdots y_{m} \cdot
    (y_{0} + \cdots + y_{m})^{n}
  \cdot
    (x_{0} + \cdots + x_{n})^{m}
 .   $$
Thus the number of spanning trees of $K_{n+1,m+1}$ is
given by
$\tau(K_{n+1,m+1}) = (m+1)^{n} \cdot (n+1)^{m}$.
\end{corollary}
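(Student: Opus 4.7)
The plan is simply to realize the complete bipartite graph $K_{n+1,m+1}$ as a Ferrers graph and invoke Theorem~\ref{theorem_spanning_trees}. Observe that $K_{n+1,m+1}$ corresponds to the rectangular Ferrers diagram with $n+1$ rows and $m+1$ columns: every pair $(u_i, v_j)$ is an edge, so both closure conditions in the definition of a Ferrers graph are trivially satisfied.

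Next I would identify the partition and dual partition. Since every vertex $u_p$ has degree $m+1$, the partition is $\lambda = (m+1, \ldots, m+1)$ with $n+1$ parts; similarly $\lambda' = (n+1, \ldots, n+1)$ with $m+1$ parts. Substituting into the formula of Theorem~\ref{theorem_spanning_trees}, each factor indexed by $p \in \{1, \ldots, n\}$ becomes $y_0 + \cdots + y_m$ and each factor indexed by $q \in \{1, \ldots, m\}$ becomes $x_0 + \cdots + x_n$, yielding
$$\Sigma(K_{n+1,m+1}) = x_0 \cdots x_n \cdot y_0 \cdots y_m \cdot (y_0 + \cdots + y_m)^n \cdot (x_0 + \cdots + x_n)^m.$$
Setting all $x_p$ and $y_q$ equal to $1$ gives $\tau(K_{n+1,m+1}) = (m+1)^n (n+1)^m$.

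There is no real obstacle here; the corollary is an immediate specialization. The only thing to be careful about is the bookkeeping of indices, namely that $\lambda$ has $n+1$ parts but the product in the theorem ranges from $p=1$ to $n$ (giving the exponent $n$, not $n+1$), and symmetrically for $\lambda'$.
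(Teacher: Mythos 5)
Your proof is correct and matches the paper's (implicit) argument exactly: the paper states this as an immediate corollary of Theorem~\ref{theorem_spanning_trees} with no further proof, and your specialization to the rectangular partition $\lambda=(m+1,\ldots,m+1)$, $\lambda'=(n+1,\ldots,n+1)$, with the index ranges $p=1,\ldots,n$ and $q=1,\ldots,m$ giving exponents $n$ and $m$, is precisely the intended derivation.
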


\section{The number of Hamiltonian paths}
\setcounter{equation}{0}

We now turn our attention to enumerating the number of Hamiltonian
(open) paths in a Ferrers graph in the case when $n = m$, that is,
when the two parts in the vertex partition of the bipartite graph
have the same cardinality. Observe that for convenience we will
identify a Hamiltonian path with its reversal.

There are two important structures to consider. The
first one is
vertebrates:
\begin{definition}
Define a {\em vertebrate} $(T,h,t)$ of a Ferrers graph as a
spanning tree $T$ together with one vertex $h$ from the set $U$
called the head and one vertex $t$ from the set $V$ called the
tail. Call the set of vertices on the unique path from the head
$h$ to the tail $t$ the \emph{joints} of the vertebrate.
\end{definition}

Since there are $\lambda^{\prime}_{0}$ ways to choose
a head and
$\lambda_{0}$ ways to choose a tail, we  have as a
direct
corollary to Theorem~\ref{theorem_spanning_trees}:

\begin{corollary}
Let $G$ be the Ferrers graph corresponding to the
partition
$\lambda$ and the dual partition $\lambda^{\prime}$.
Then the
number of vertebrates of the Ferrers graph $G$ is
given by
$$  \prod_{p=0}^{n} \lambda_{p}
   \cdot
    \prod_{q=0}^{m} \lambda^{\prime}_{q}   .   $$
\label{corollary_vertebrate}
\end{corollary}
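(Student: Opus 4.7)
The plan is to observe that this corollary is essentially an immediate accounting consequence of Theorem~\ref{theorem_spanning_trees}, so no new ideas are needed beyond identifying the correct multiplicative factors. A vertebrate is the data of a spanning tree together with an independent choice of head $h \in U$ and tail $t \in V$, so the count of vertebrates is $|U| \cdot |V| \cdot \tau(G)$.

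First I would note that the definition of a Ferrers graph forces $\lambda^{\prime}_{0} = n+1$ and $\lambda_{0} = m+1$. Indeed, since $(u_{n},v_{0})$ is required to be an edge, the closure axiom of Ferrers graphs implies that $(u_{p}, v_{0})$ is an edge for every $0 \leq p \leq n$, so $v_{0}$ has degree $n+1 = |U|$. The symmetric argument using $(u_{0}, v_{m})$ gives $\lambda_{0} = m+1 = |V|$. Thus the number of choices of head equals $\lambda^{\prime}_{0}$ and the number of choices of tail equals $\lambda_{0}$, justifying the remark that precedes the corollary statement.

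Next I would invoke Theorem~\ref{theorem_spanning_trees} to write
$$ \tau(G) = \prod_{p=1}^{n} \lambda_{p} \cdot \prod_{q=1}^{m} \lambda^{\prime}_{q}, $$
and then multiply by $\lambda^{\prime}_{0} \cdot \lambda_{0}$ to fold the missing $p=0$ and $q=0$ factors into the respective products, yielding
$$ \lambda_{0} \cdot \lambda^{\prime}_{0} \cdot \tau(G) = \prod_{p=0}^{n} \lambda_{p} \cdot \prod_{q=0}^{m} \lambda^{\prime}_{q}, $$
which is the claimed count.

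There is no real obstacle here; the only point worth double-checking is that the head and tail are chosen independently of the tree (so the factors genuinely multiply) and that the identification of $\lambda_{0}$ and $\lambda^{\prime}_{0}$ with $|V|$ and $|U|$ comes directly from the two axioms in the definition of a Ferrers graph rather than requiring any separate case analysis.
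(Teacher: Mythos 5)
Your proof is correct and follows exactly the paper's route: the paper likewise obtains the corollary by multiplying $\tau(G)$ from Theorem~\ref{theorem_spanning_trees} by the $\lambda^{\prime}_{0}$ choices of head and $\lambda_{0}$ choices of tail. Your additional verification that $\lambda^{\prime}_{0}=n+1$ and $\lambda_{0}=m+1$ follow from the two axioms in the definition of a Ferrers graph is a correct elaboration of a step the paper leaves implicit.
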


The other important structure we will work with is permissible
functions on the set $U \cup V$. We call a function $f : U \cup V
\longrightarrow U \cup V$ {\em permissible} if, for all $z \in U
\cup V$,  $(z,f(z))$ is an edge in the associated Ferrers graph.
Observe that the product in Corollary~\ref{corollary_vertebrate}
also enumerates the number of permissible functions on the Ferrers
graph $G$.

For a function $f$ let $f^{k}$ denote the $k$th power
of the
function under composition, that is, $f^{k} = f \circ
\cdots \circ
f$. For a permissible function $f$ call the set $E(f)
= \cap_{k
\geq 1} \mbox{\rm Im}(f^{k})$ the essential set of the
function~$f$. Observe that $f$ restricts to a
permissible
permutation on the set $E(f)$. Moreover, the essential
set $E(f)$
intersects the sets $U$ and $V$ in equally large
subsets.

Using similar ideas of Andr\'e Joyal~\cite{Joyal} we
are able to
prove for Ferrers graphs:

\begin{theorem}
Let $G$ be a Ferrers graph with n=m, that is, each of the two
parts in the vertex partition have the same cardinality. Then the
number of Hamiltonian paths in $G$ is equal to the square of the
number of placements of n+1 rooks on the associated Ferrers board.
\label{theorem_Hamiltonian_paths}
\end{theorem}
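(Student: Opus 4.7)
The plan follows Joyal's bijective proof of Cayley's formula, adapted to the bipartite Ferrers setting. By Corollary~\ref{corollary_vertebrate} and the remark immediately following it, vertebrates of $G$ and permissible functions on $U \cup V$ are both counted by $\prod_{p=0}^n \lambda_p \cdot \prod_{q=0}^m \lambda'_q$, so the substantive goal is to exhibit a bijection $\Phi$ from vertebrates to permissible functions. Two easy observations make this enough. First, once reversals are identified, each Hamiltonian path of $G$ has one endpoint in $U$ and one in $V$, so Hamiltonian paths correspond precisely to the \emph{full-spine} vertebrates $(T,h,t)$, meaning those whose joints exhaust $U \cup V$. Second, a permissible function whose essential set is all of $U \cup V$ is surjective, hence a permutation of $U \cup V$; since every edge of $G$ crosses the bipartition, such a permutation restricts to an edge-respecting bijection $\alpha : U \to V$ and an independent edge-respecting bijection $\beta : V \to U$. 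Each such bijection is precisely a placement of $n+1$ non-attacking rooks on the associated Ferrers board, so the number of permissible permutations equals the square of the number of rook placements.

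Granted that $\Phi$ can be chosen to send full-spine vertebrates bijectively to permissible permutations, the theorem is immediate. The construction follows Joyal's template. Given $(T,h,t)$ with $h\in U$ and $t\in V$, let $h=s_0,s_1,\ldots,s_r=t$ be the spine in path order, which alternates between $U$ and $V$. Deleting the spine edges breaks $T$ into a forest of trees rooted at the spine vertices; for each non-spine vertex $z$ set $f(z)$ to be the neighbor of $z$ in $T$ on the path from $z$ toward its root, which is an edge of $G$ by construction. It remains to define $f$ on the spine so that $f$ restricted to $\{s_0,\ldots,s_r\}$ is a permissible permutation, by inverting the canonical-cycle-form encoding Joyal uses: separate the spine into its $U$-subword and its $V$-subword in path order, interpret these as the concatenated lists obtained by writing the cycles of the desired permutation in canonical form (each cycle rotated to begin at its minimum, the cycles ordered by those minima), and define $f$ on the spine by the resulting cycle structure.

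The main obstacle is to verify that this $\Phi$ is a well-defined bijection: the spine-permutation extracted from an arbitrary spine must be permissible, and, conversely, the spine reconstructed from an arbitrary permissible permutation of any subset $S\subseteq U\cup V$ with $|S\cap U|=|S\cap V|$ must be a valid alternating path in $G$. This is exactly where the defining monotonicity of a Ferrers graph --- that $(u_p,v_q)$ being an edge forces $(u_{p'},v_{q'})$ to be an edge for all $p'\leq p$ and $q'\leq q$ --- must be deployed, and it requires careful bookkeeping at the points where consecutive canonical cycles are concatenated across the spine. Once this is established, a vertebrate is full-spine if and only if its image under $\Phi$ has essential set equal to $U\cup V$, so $\Phi$ restricts to a bijection between Hamiltonian paths and permissible permutations of $U\cup V$, and the count in the first paragraph finishes the proof.
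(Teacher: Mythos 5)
Your framing matches the paper's: both reduce the problem to comparing vertebrates with permissible functions, both note that the two families have the same total cardinality, that Hamiltonian paths are exactly the vertebrates whose joints exhaust $U\cup V$, and that permissible permutations of $U\cup V$ are pairs of rook placements. But the engine of your argument --- an explicit Joyal-style bijection $\Phi$ whose spine-to-permutation step is the canonical-cycle-form encoding --- is precisely the step you leave unproved, and it is not a matter of ``careful bookkeeping'': as stated it fails. Take $\lambda=(3,3,1)$, so $u_2$ is adjacent only to $v_0$, and consider the permissible permutation with cycles $(u_0\, v_1\, u_1\, v_2)$ and $(u_2\, v_0)$ (this \emph{is} permissible: every pair $(z,\pi(z))$ is an edge). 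Writing the cycles in canonical form ordered by minima and concatenating gives the word $u_0,v_1,u_1,v_2,u_2,v_0$, whose consecutive pair $(v_2,u_2)$ is not an edge, so the reconstructed spine is not a path in $G$. Symmetrically, reading a genuine spine as a canonical cycle word creates wrap-around pairs $(z,f(z))$ that were never consecutive on the path and need not be edges. So $\Phi$ as described is not well defined in either direction, and no amount of local bookkeeping with the Ferrers monotonicity rescues this particular encoding; a correct explicit bijection exists (Burns's preprint cited in the paper) but requires a genuinely different construction.

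The paper sidesteps the need for any explicit bijection by inducting on $n$. For each proper subset $S\subseteq U\cup V$ with $|S\cap U|=|S\cap V|$, a vertebrate with joint set $S$ factors as (a Hamiltonian path of $G$ restricted to $S$) times (a permissible forest of trees rooted at $S$), and a permissible function with essential set $S$ factors as (a permissible permutation of $S$) times (the same forest data). The induction hypothesis, applied to the smaller Ferrers graph on $S$, supplies the \emph{numerical} equality between paths on $S$ and permissible permutations on $S$ --- no bijection between them is ever constructed --- so the two factored counts agree for every proper $S$. Summing over proper $S$ shows that non-path vertebrates and non-permutation functions are equinumerous, and subtracting from the equal totals finishes the proof. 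To repair your write-up, either adopt this inductive counting argument or replace the canonical-cycle-form step with a construction that provably respects permissibility; the latter is the hard part of the theorem, not a verification to be deferred.
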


Observe that the number of rook placements on a Ferrers board with
$n+1$ rooks is $\lambda_{n} \cdot (\lambda_{n-1} - 1) \cdots
(\lambda_{0} - n)$, where $\lambda$ is the associated partition.
Similarly, this is also equal to $\lambda^{\prime}_{n} \cdot
(\lambda^{\prime}_{n-1} - 1) \cdots (\lambda^{\prime}_{0} - n)$,
where $\lambda^{\prime}$ is the dual partition.

\begin{proof_}{{Proof of
Theorem~\ref{theorem_Hamiltonian_paths}}} First
observe that the
number of rook placements squared is equal to the
number of
permissible bijections $\pi$ on the Ferrers graph $G$.

The proof of the statement is by induction on $n$. The
induction
basis is $n=0$ which is straightforward. Now the
induction step.

Let $S$ be a proper subset of $U \cup V$ such that $S \cap U$ and
$S \cap V$ have equal size. We claim that the number of
vertebrates of the Ferrers graph $G$ with the joints being the set
$S$ is equal to the number of permissible functions on $G$ having
 essential set $S$. By the induction hypothesis we
know that the number of Hamiltonian paths on $G$
restricted to the
set $S$ is equal to the number of permissible
permutations on the
set $S$. Now, a vertebrate is a path such that each
vertex in the
path is the root of a tree. Similarly a function is a
permutation
such that each entry in the permutation is the `root'
of a `tree'.
For instance, for a root $s$ in the essential set $S$
of a
permissible function $f$ the tree is the collection of
vertices
$z$ such that $f^{k}(z) \in S$ implies there exists $i
\leq k$
such that $f^{i}(z) = s$ but $f^j(z)\not\in S$ for
$j<i$. Hence
the claim follows by changing a path on the set $S$ to
a
permissible permutation on the set $S$.

Now by summing over all $S$ strictly contained in $U
\cup V$ we
have that the number of vertebrates that are not paths
is equal to
the number of permissible functions that are not
permutations.
Since the cardinalities of vertebrates and permissible
functions
are the same we are done.
\end{proof_}

\section{The chromatic polynomial and the linear
coefficient}

Before we embark on deriving the chromatic polynomial let us
recall the excedance set statistic. It was first studied
in~\cite{Ehrenborg_Steingrimsson,Ehrenborg_Steingrimsson_2}. We
follow their notation and instead of speaking of the excedance
set, we talk about the excedance word.

Define {\em the excedance word} of a permutation $\pi = \pi_{1}
\cdots \pi_{k+1}$ in $S_{k+1}$ to be the word $w = w_1 \cdots w_k$
where $w_{i} = \av$ if $\pi_{i} \leq i$ and $w_{i} = \bv$ if
$\pi_{i} > i$. For an $\ab$-word $w$ of length $k$ let $[w]$
denote the number of permutations in $S_{k+1}$ with excedance word
$w$.

Following~\cite{Ehrenborg_Steingrimsson} let $R_{m} =
\{\rv=(r_0,\ldots,r_{m}) \: : \: r_{0} = 1, r_{i+1} - r_i \in
\{0,1\}\}$. Thus, each vector $\rv=(r_{0},\ldots,r_{m})$ in
$R_{m}$ starts with $r_{0}=1$ and increases by at most one at each
coordinate. Let $h(\rv)$ be the number of indices $i$ such that
$r_{i+1} = r_{i}$. We then have the following result;
see~\cite[Theorem~6.3]{Ehrenborg_Steingrimsson}.

\begin{theorem}
Let $w$ be an $\av\bv$-word with exactly $m$ $\bv$'s. That is, we
can write $w  = \av^{n_{0}} \bv \av^{n_{1}} \bv \cdots \bv
\av^{n_{m}}$. Then the excedance set statistic $[w]$ is given by
$$
    [w]
       =
    \sum_{\rv \in R_{m}}
       (-1)^{h(\rv)}
         \cdot
       r_{0}^{n_{0} + 1}
         \cdot
       r_{1}^{n_{1} + 1}
         \cdots
       r_{m}^{n_{m} + 1}  .   $$
\label{theorem_inclusion_exclusion}
\end{theorem}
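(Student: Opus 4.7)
The plan is to establish the identity by inclusion-exclusion on the $\bv$-conditions defining the excedance word. A permutation $\pi \in S_{k+1}$ has excedance word $w = \av^{n_{0}} \bv \av^{n_{1}} \bv \cdots \bv \av^{n_{m}}$ exactly when $\pi_{i} \leq i$ for every $i$ in the $\av$-set $A \subseteq [k]$ and $\pi_{i} > i$ for every $i$ in the $\bv$-set $B = \{p_{1} < p_{2} < \cdots < p_{m}\}$, where $p_{j} = n_{0} + n_{1} + \cdots + n_{j-1} + j$. Inclusion-exclusion on the ``$\pi_{i} > i$'' events at the positions in $B$ yields
$$[w] \;=\; \sum_{S \subseteq B} (-1)^{|S|}\, N_{A \cup S},$$
where $N_{T}$ counts permutations of $[k+1]$ with $\pi_{i} \leq i$ for every $i \in T$.

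Next I would evaluate $N_{T}$ by filling the $T$-positions in increasing order $t_{1} < \cdots < t_{s}$ before filling the free set $F = [k+1] \setminus T$. All previously placed values are smaller than $t_{j}$, so at step $j$ there are exactly $t_{j} - (j-1) = 1 + |\{f \in F : f < t_{j}\}|$ admissible values; the remaining values then fill $F$ in any of $|F|!$ orders, giving
$$N_{T} \;=\; |F|! \prod_{t \in T}\bigl(1 + |\{f \in F : f < t\}|\bigr).$$

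The crucial step is to reindex the sum by vectors $\rv \in R_{m}$. Identify $S \subseteq B$ with $\rv$ via $r_{i} - r_{i-1} = 0$ if $p_{i} \in S$ and $r_{i} - r_{i-1} = 1$ otherwise, so that $|S| = h(\rv)$ and $|F| = (m+1) - |S| = r_{m}$. Setting $p_{0} = 0$ and $p_{m+1} = k+2$, a short case analysis shows that every $T$-position $t$ with $p_{i} < t < p_{i+1}$ contributes a factor of $r_{i}$, and every $\bv$-position $p_{i} \in S$ contributes a factor of $r_{i}$ as well, giving
$$N_{A \cup S} \;=\; r_{m}! \cdot \prod_{i=0}^{m} r_{i}^{n_{i}} \cdot \prod_{i\,:\, r_{i} = r_{i-1}} r_{i}.$$

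Finally I would verify the telescoping identity $r_{m}! \cdot \prod_{i\,:\, r_{i} = r_{i-1}} r_{i} = r_{0} r_{1} \cdots r_{m}$. Grouping the entries of $\rv$ by the distinct values $v \in \{1, \ldots, r_{m}\}$ they attain, with multiplicity $c_{v}$ each, the flat-step product is $\prod_{v} v^{c_{v}-1}$ while $r_{m}! = \prod_{v} v$; their product is $\prod_{v} v^{c_{v}} = \prod_{i=0}^{m} r_{i}$. Combining yields $N_{A \cup S} = \prod_{i=0}^{m} r_{i}^{n_{i}+1}$, and substitution into the inclusion-exclusion expansion produces exactly the stated formula. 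The principal obstacle is the bookkeeping in the third step: tracking how factors of $r_{i}$ distribute among the interior $\av$-positions, the included $\bv$-positions, and the trailing vertex $k+1$, which is always free yet carries no letter of $w$.
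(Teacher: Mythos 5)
Your argument is correct, and it is worth noting that the paper itself offers no proof of this statement: Theorem~\ref{theorem_inclusion_exclusion} is imported verbatim from Ehrenborg--Steingr\'{\i}msson \cite[Theorem~6.3]{Ehrenborg_Steingrimsson}, so there is no internal proof to compare against. What you have supplied is a clean, self-contained derivation whose structure matches the ``inclusion-exclusion'' flavour advertised by the theorem's label: you expand over which of the $m$ excedance conditions $\pi_{p_i}>p_i$ are relaxed to $\pi_{p_i}\le p_i$, evaluate the resulting counts $N_T$ by the standard greedy product $|F|!\prod_{t\in T}(1+|\{f\in F: f<t\}|)$, and then observe that the bijection $S\mapsto\rv$ (flat step at $i$ iff $p_i\in S$) converts $(-1)^{|S|}$ into $(-1)^{h(\rv)}$ and the product into $\prod_i r_i^{n_i+1}$. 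I checked the delicate points: the identity $r_m!\cdot\prod_{i:\,r_i=r_{i-1}}r_i=r_0r_1\cdots r_m$ follows exactly as you say by grouping the $r_i$ by value, the factor $r_{i-1}=r_i$ attached to an included $\bv$-position is consistent, and you correctly account for the position $k+1$, which lies in $F$ but carries no letter and hence contributes only to $|F|!=r_m!$. Small examples ($w=\bv$, $w=\av\bv$, $w=\bv\av$) confirm the bookkeeping. The one presentational quibble is that the ``short case analysis'' in your third step deserves to be written out, since it is where the entire content of the reindexing lives; but the analysis does go through, so there is no gap.
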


For an $\ab$-word $w$, let $\chi(w)$ denote the chromatic
polynomial in $t$ of the Ferrers graph $G$ associated with  $w$.
Moreover, let $|w|$ denote the length of the $\ab$-word $w$. Now
we can state the relationship between the linear coefficient of
the chromatic polynomial and the excedance set statistic.

\begin{theorem}
The linear coefficient of the chromatic polynomial
$\chi(w)$ is
given by $(-1)^{|w|+1} \cdot [w]$.
\label{theorem_main}
\end{theorem}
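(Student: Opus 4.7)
My plan is to compute $\chi(w)(t)$ by conditioning on a coloring of $V$, then extract the linear coefficient and match it term-by-term against the formula for $[w]$ in Theorem~\ref{theorem_inclusion_exclusion}. First I would write $\chi(w)(t) = \sum_{c : V \to [t]} \prod_{i=0}^n (t - n_i(c))$ where $n_i(c) = |\{c(v_0), \ldots, c(v_{\lambda_i-1})\}|$ counts the colors forbidden at $u_i$. I would then group colorings by their \emph{distinct-color signature} $\rv(c) = (r_0(c), \ldots, r_m(c))$ with $r_j(c) = |\{c(v_0), \ldots, c(v_j)\}|$: each such $\rv(c)$ starts at $1$ and increments by $0$ or $1$, so it lies in the indexing set $R_m$ of Theorem~\ref{theorem_inclusion_exclusion}. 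For a fixed $\rv \in R_m$, the number of colorings with that signature factors as $t^{\underline{r_m}} \cdot \prod_{j : r_j = r_{j-1}} r_{j-1}$, where the falling factorial enumerates the ordered choice of the $r_m$ distinct colors as they first appear and each ``repeat'' position contributes $r_{j-1}$ choices from the already-used palette. Substituting this and using $n_i(c) = r_{\lambda_i - 1}(c)$ gives
$$\chi(w)(t) = \sum_{\rv \in R_m} t^{\underline{r_m}} \prod_{j : r_j = r_{j-1}} r_{j-1} \cdot \prod_{i=0}^n (t - r_{\lambda_i - 1}).$$

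To extract $[t^1]$, I would observe that $r_{\lambda_i - 1} \geq r_0 = 1$, so the only factor contributing a power of $t$ in each summand is the leading $t$ of $t^{\underline{r_m}}$. Hence the coefficient of $t$ in the summand equals
$$\prod_{j : r_j = r_{j-1}} r_{j-1} \cdot (-1)^{r_m - 1}(r_m - 1)! \cdot (-1)^{n+1} \prod_{i=0}^n r_{\lambda_i - 1}.$$
A short walk-to-partition computation---reading off $\lambda$ from $w = \av^{n_0} \bv \av^{n_1} \bv \cdots \bv \av^{n_m}$, where each $\av$ in the block $\av^{n_k}$ traces the right edge of a row of length $k+1$---shows that as $i$ ranges over $\{0, 1, \ldots, n\}$, the value $\lambda_i - 1 = k$ occurs $n_k$ times for $0 \leq k < m$ and $n_m + 1$ times for $k = m$, the extra contribution coming from $\lambda_0 = m+1$. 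Therefore $\prod_{i=0}^n r_{\lambda_i - 1} = r_0^{n_0} r_1^{n_1} \cdots r_{m-1}^{n_{m-1}} r_m^{n_m + 1}$.

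The final step is the identity
$$(m - h(\rv))! \cdot \prod_{j : r_j = r_{j-1}} r_{j-1} = r_0 r_1 \cdots r_{m-1},$$
which holds because the pre-increment values $r_{j-1}$ at the ``new-color'' positions $S = \{j : r_j = r_{j-1} + 1\}$ run exactly through $1, 2, \ldots, m - h(\rv)$ (starting from $r_0 = 1$ and gaining one unit at each $j \in S$), so $\prod_{j \in S} r_{j-1} = (m - h(\rv))!$, and combining with the complementary ``repeat'' product recovers all of $\prod_{j=1}^m r_{j-1}$. Substituting this identity and using $r_m - 1 = m - h(\rv)$ to reassemble the sign as $(-1)^{|w|+1}(-1)^{h(\rv)}$ collapses everything to
$$[t^1]\, \chi(w)(t) = (-1)^{|w|+1} \sum_{\rv \in R_m} (-1)^{h(\rv)} \prod_{k=0}^m r_k^{n_k + 1} = (-1)^{|w|+1}\, [w]$$
by Theorem~\ref{theorem_inclusion_exclusion}. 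The main obstacle in executing this plan is the combinatorial bookkeeping of the walk-to-partition dictionary for the multiset $\{\lambda_i - 1\}$ together with tracking the interlocking sign factors; once those are under control, the key factorial identity on $S$ is quite clean.
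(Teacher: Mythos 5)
Your proof is correct and follows essentially the same route as the paper: your intermediate formula is exactly Theorem~\ref{theorem_chromatic} with $t\cdot f_{1}\cdots f_{m}$ rewritten as the falling factorial $t(t-1)\cdots(t-r_{m}+1)$ times the product over repeat positions, and the endgame (divide by $t$, set $t=0$, track signs, and invoke Theorem~\ref{theorem_inclusion_exclusion}) is the paper's as well. The only cosmetic difference is that you cancel an $(r_{m}-1)!$ against $\prod_{j\in S}r_{j-1}=(m-h(\rv))!$, where the paper never introduces the factorial because it keeps the new-color factors in the form $t-r_{j-1}$ and simply counts sign changes.
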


It is straightforward to observe that $\chi(\av w) = \chi(w \bv) =
(t-1) \cdot \chi(w)$ and $\chi(1) = t \cdot (t-1)$, where the $1$
in $\chi(1)$ denotes the empty word.

For a vector $\rv$ in the set $R_{m}$ and $1 \leq i
\leq m$ define
$f_{i}(\rv) = f_{i}$ by $f_{i} = t - r_{i-1}$ if
$r_{i} - r_{i-1}
= 1$ and $f_{i} = r_{i-1}$ otherwise.

\begin{theorem}
Let $w$ be an $\av\bv$-word with exactly $m$ $\bv$'s,
that is, $w
= \av^{n_{0}} \bv \av^{n_{1}} \bv \cdots \bv
\av^{n_{m}}$. Then
the chromatic polynomial $\chi(w)$ of the associated
Ferrers graph
$G$ is given by:
$$
    \chi(w)
       =
    \sum_{\rv \in R_{m}}
       t \cdot (t-r_{0})^{n_{0}} \cdot
       f_{1} \cdot (t-r_{1})^{n_{1}} \cdot
       f_{2} \cdots
       f_{m-1} \cdot (t-r_{m-1})^{n_{m-1}} \cdot
       f_{m} \cdot (t-r_{m})^{n_{m} + 1}.
$$
\label{theorem_chromatic}
\end{theorem}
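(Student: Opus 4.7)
The plan is to prove the formula by a direct, staged count of proper $t$-colourings of the Ferrers graph $G$ associated with $w$. Since $G$ is bipartite with parts $U$ and $V$, the set $V$ is independent, so I can colour $V$ first with no internal constraints and then colour $U$ using the resulting $V$-colouring. The key step is to organise the $V$-count by a statistic on the colouring that turns out to range exactly over $R_m$.

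For a colouring $c$ of $V$, define $r_j = |\{c(v_0), \ldots, c(v_j)\}|$. Then $r_0 = 1$ and $r_{j+1} - r_j \in \{0,1\}$, so $\rv = (r_0, \ldots, r_m) \in R_m$. For a prescribed $\rv$, the number of $V$-colourings realising it equals $t \cdot \prod_{j=1}^{m} f_j$: the colour of $v_0$ is arbitrary ($t$ choices, automatically making $r_0 = 1$), and at each subsequent step $v_j$ either repeats one of the $r_{j-1}$ colours already used (contributing $r_{j-1}$ choices) or introduces a fresh colour (contributing $t - r_{j-1}$ choices), in exact agreement with the two cases defining $f_j$.

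Next I colour $U$. A direct reading of the border path encoded by $w$ shows that for each $0 \leq j < m$ the $\av^{n_j}$ block records exactly the rows $u_i$ with $i \geq 1$ and $\lambda_i = j+1$; the trailing block $\av^{n_m}$ together with the always-maximal vertex $u_0$ accounts for the $n_m + 1$ rows of degree $m+1$. A vertex $u_i$ of degree $j+1$ is adjacent precisely to $v_0, \ldots, v_j$, which use exactly $r_j$ distinct colours under the $V$-colouring, so $u_i$ has $t - r_j$ admissible colours. Multiplying over all of $U$ produces $\prod_{j=0}^{m-1}(t - r_j)^{n_j} \cdot (t - r_m)^{n_m + 1}$.

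Finally, multiplying the $V$- and $U$-counts and summing over $\rv \in R_m$ yields the claimed expression in the stated interleaved order: $t$ colours $v_0$, each $f_j$ colours $v_j$, and each $(t - r_j)^{n_j}$ (respectively $(t - r_m)^{n_m+1}$) colours the rows of degree $j+1$. The main obstacle I anticipate is the degree-block correspondence used in the third paragraph; once that reading of $w$ is carefully justified from the border-path description, the rest of the proof is straightforward bookkeeping of colour choices. An inductive alternative using $\chi(\av w) = \chi(w \bv) = (t-1)\chi(w)$ handles leading $\av$'s and trailing $\bv$'s but leaves the residual case $w = \bv \cdots \av$ untouched, so the direct combinatorial argument above seems preferable.
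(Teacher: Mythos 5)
Your proposal is correct and follows essentially the same route as the paper's own proof: stratify proper colourings by the vector $\rv$ recording the number of distinct colours on $v_0,\ldots,v_j$, count the $V$-colourings realising $\rv$ as $t\cdot f_1\cdots f_m$, and then colour each $u$-vertex of degree $j+1$ in $t-r_j$ ways. The only addition is your explicit justification of the degree-block correspondence from the border path, which the paper asserts without comment; otherwise the arguments coincide.
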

\begin{proof}
For a proper coloring of the graph $G$ let $r_{i}$ be
the number
of distinct colors appearing on the $i+1$ nodes
$v_{0}$ through
$v_{i}$. Let us determine how many colorings there are
of the
graph with a given vector $\rv = (r_{0}, \ldots,
r_{m})$.

The node $v_{0}$ can be colored in $t$ ways. If $r_{i} - r_{i-1} =
1$ then the the node $v_{i}$ is colored with a color not used
before, and there are $t - r_{i-1}$ such colors. If $r_{i+1} -
r_{i} = 0$ then the node is colored with an `old' color, and there
are $r_{i-1}$ such colors. In both cases we have $f_{i}$
possibilities.

For $i \leq m-1$ observe that there are $n_{i}$ $u$-nodes that are
 connected exactly to the nodes $v_{0}, \ldots, v_{i}$. There are
$(t - r_{i})^{n_{i}}$ ways to color these $n_{i}$ nodes, since
they all have to avoid the $r_{i}$ colors of the nodes $v_{0},
\ldots, v_{i}$. Finally, there are $n_{m} + 1$ $u$-nodes that are
connected to all the $v$-nodes $v_{0}, \ldots, v_{m}$. Similarly,
there are $(t - r_{m})^{n_{m}+1}$ ways to color these nodes. Hence
there are
$$
       t \cdot
       f_{1} \cdot
       f_{2} \cdots
       f_{m} \cdot
       (t-r_{0})^{n_{0}} \cdots
       (t-r_{m-1})^{n_{m-1}} \cdot
       (t-r_{m})^{n_{m} + 1}
$$
ways to color the graph $G$ with a given $\rv$-vector.
Now summing
over all possible $\rv$-vectors the result follows.
\end{proof}

We now prove the main result:

\begin{proof_}{{Proof of Theorem~\ref{theorem_main}}}
To obtain the linear coefficient in $\chi(w)$ divide
by $t$ and
set $t=0$. Observe that $f_{i}$ evaluated at $t=0$ is
equal to
$r_{i-1}$ with a sign change if $r_{i} - r_{i-1} = 1$.
The number
of such sign changes is $m-h(\rv)$. Moreover we also
obtain $n_{0}
+ n_{1} + \cdots + n_{m} + 1$ sign changes from the
other factors.
Hence the total number of sign changes is $m-h(\rv) +
n_{0} +
n_{1} + \cdots + n_{m} + 1
  =|w| - h(\rv) + 1$.

The remainder of the term corresponding to $\rv$ can
now be
written as $r_{0}^{n_{0} + 1} \cdot
 r_{1}^{n_{1} + 1} \cdots
 r_{m}^{n_{m} + 1}$, and the result follows by
Theorem~\ref{theorem_inclusion_exclusion}.
\end{proof_}

There is one important special case of
Theorem~\ref{theorem_chromatic}:

\begin{proposition}
The chromatic polynomial of the complete bipartite
graph
$K_{n+1,m+1}$ is given by
$$
    \chi(\bv^{m} \av^{n})
  =
    \sum_{k=1}^{m+1}
        S(m+1,k) \cdot t \cdot (t-1) \cdots (t-k+1)
\cdot (t-k)^{n+1}
    ,
$$
where $S(m,k)$ denotes the Stirling number of the
second kind.
\label{proposition_b_m_a_n}
\end{proposition}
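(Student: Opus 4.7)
The plan is to specialize Theorem~\ref{theorem_chromatic} to the word $w = \bv^m \av^n$, regroup the sum by its final height, and recognize the remaining coefficient as a Stirling number of the second kind.

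First, I would write $w = \bv^m \av^n$ in the standard form $\av^{n_0}\bv\av^{n_1}\bv\cdots\bv\av^{n_m}$. This forces $n_0 = n_1 = \cdots = n_{m-1} = 0$ and $n_m = n$, so every factor $(t-r_i)^{n_i}$ for $i < m$ collapses to $1$. Thus Theorem~\ref{theorem_chromatic} gives
$$
   \chi(\bv^m \av^n)
      =
   \sum_{\rv \in R_m} t \cdot f_1 \cdot f_2 \cdots f_m \cdot (t-r_m)^{n+1}.
$$

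Next, I would group the sum according to the final value $k = r_m \in \{1, 2, \ldots, m+1\}$. Any $\rv \in R_m$ with $r_0 = 1$ and $r_m = k$ uses exactly $k-1$ up-steps (where $r_i - r_{i-1} = 1$) and $m - k + 1$ level-steps (where $r_i = r_{i-1}$). The up-steps are forced to pass through the heights $1, 2, \ldots, k-1$ in that order, so they contribute the fixed falling factorial $(t-1)(t-2)\cdots(t-k+1)$ regardless of where they occur. Pulling out this factor, together with the global $t$ and $(t-k)^{n+1}$, yields
$$
   \chi(\bv^m \av^n)
      =
   \sum_{k=1}^{m+1} t \cdot (t-1) \cdots (t-k+1) \cdot (t-k)^{n+1} \cdot \Lambda_{m,k},
$$
where $\Lambda_{m,k}$ is the sum over vectors $\rv \in R_m$ with $r_m = k$ of the product $\prod r_{i-1}$ taken only over the level-step positions.

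The substantive step is to identify $\Lambda_{m,k} = S(m+1,k)$. I would do this by the classical interpretation of $S(m+1,k)$ as counting set partitions of $\{0, 1, \ldots, m\}$ into $k$ nonempty blocks: processing the elements in order, set $r_i$ to be the number of blocks used after position $i$; then $r_0 = 1$, each up-step corresponds to opening a new block (contributing $1$ way), and each level-step at height $r_{i-1}$ corresponds to choosing one of the $r_{i-1}$ existing blocks to extend (contributing the weight $r_{i-1}$). Summing the weights over all lattice paths ending at $k$ therefore counts such set partitions, giving $\Lambda_{m,k} = S(m+1,k)$ and completing the proof. The main (mild) obstacle is simply spotting this weighted-lattice-path description of the Stirling numbers; the rest is a routine specialization of Theorem~\ref{theorem_chromatic}.
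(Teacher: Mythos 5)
Your proof is correct, but it takes a genuinely different route from the paper. The paper proves the proposition directly and from scratch: it colors the vertices $v_0,\ldots,v_m$ using exactly $k$ of the $t$ available colors, which can be done in $S(m+1,k)\cdot t(t-1)\cdots(t-k+1)$ ways (partition the $v$-vertices into $k$ color classes, then assign distinct colors to the classes), and then colors each of the $n+1$ vertices $u_i$ with any of the remaining $t-k$ colors; summing over $k$ gives the formula in two lines. You instead specialize the general formula of Theorem~\ref{theorem_chromatic} to $w=\bv^m\av^n$, group the vectors $\rv\in R_m$ by their final value $k=r_m$, factor out the falling factorial contributed by the up-steps, and identify the remaining weighted lattice-path sum $\Lambda_{m,k}$ (level steps weighted by their height) with $S(m+1,k)$ via the standard block-by-block construction of set partitions (equivalently, the recurrence $\Lambda_{m,k}=k\Lambda_{m-1,k}+\Lambda_{m-1,k-1}$). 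All of these steps check out. The paper's argument is shorter and self-contained, and incidentally gives an independent verification of Theorem~\ref{theorem_chromatic} in this special case; your derivation buys the reassurance that the proposition really is a corollary of the general theorem, at the cost of needing the weighted-path interpretation of the Stirling numbers.
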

\begin{proof}
Begin to color the vertices $v_{0}, v_{1}, \ldots, v_{m}$ with
exactly $k$ colors where $1 \leq k \leq m+1$. This can be done in
$S(m+1,k) \cdot t \cdot (t-1) \cdots (t-k+1)$ ways. There are
$(t-k)^{n+1}$ ways to color the remaining vertices $u_{0}, u_{1},
\ldots, u_{n}$.
\end{proof}

The linear coefficient of the chromatic polynomial (up
to a sign)
also has the interpretation of being the number of
acyclic
orientations of the graph with a {\em unique} given
sink~\cite{Greene_Zaslavsky}. Also observe that it is
enough to
note that there are no directed $4$-cycles in an
orientation of
the edges in a Ferrers graph to guarantee that the
orientation is
acyclic. Expressing this in terms of the associated
Ferrers
diagram we have:
\begin{corollary}
The excedance set statistic $[w]$ is the number
colorings of the
boxes in the Ferrers diagram associated to the
$\ab$-word $w$ with
colors red and blue such that
\vspace*{-3 mm}
\begin{itemize}
\item[(i)] there are no four boxes $(p,r), (p,s), (q,r),
(q,s)$ such that
$(p,r)$ and $(q,s)$ are colored red and $(p,s)$ and
$(q,r)$ are
colored blue,

\item[(ii)] there is a unique given row where all the boxes
are colored
red, and

\item[(iii)] there is no column where all the boxes are
colored blue.
\end{itemize}
\label{corollary_coloring}
\end{corollary}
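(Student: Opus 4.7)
The plan is to combine Theorem~\ref{theorem_main} with the classical theorem of Greene and Zaslavsky~\cite{Greene_Zaslavsky} already referenced in the discussion: for any fixed vertex of a connected graph $G$, the number of acyclic orientations having that vertex as its unique sink equals $(-1)^{|V(G)|-1}$ times the linear coefficient of $\chi_G(t)$. In our Ferrers graph $|V(G)|=(n+1)+(m+1)=|w|+2$, so the sign is $(-1)^{|w|+1}$, and Theorem~\ref{theorem_main} identifies this count with $[w]$. Taking the distinguished sink to be the $u$-vertex $u_r$ corresponding to the ``given row'' $r$, it therefore suffices to biject such acyclic orientations against colorings satisfying (i)--(iii).

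The bijection is the natural one: color the box $(i,j)$ \emph{red} if the edge $(u_i,v_j)$ is oriented $v_j\to u_i$, and \emph{blue} if it is oriented $u_i\to v_j$. Under this convention $u_i$ is a sink iff row $i$ is entirely red, and $v_j$ is a sink iff column $j$ is entirely blue; the specified-unique-sink requirement is then exactly (ii) combined with (iii). For condition (i), any directed $4$-cycle must live on a vertex set of the form $\{u_p,u_q,v_r,v_s\}$ and comes in one of two rotational directions; unpacking either direction shows that the four boxes $(p,r),(p,s),(q,r),(q,s)$ exhibit exactly the red/blue pattern forbidden in (i) (the two rotations agree after swapping $p\leftrightarrow q$). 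Thus (i) is equivalent to the orientation having no directed $4$-cycle.

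The one delicate step, and the main obstacle, is the assertion made just before the corollary: for a Ferrers graph, absence of directed $4$-cycles already forces acyclicity. I would prove this by taking a hypothetical directed cycle of minimum length $2k$ and ruling out $k\geq 3$. Cyclically relabel so that the cycle reads
\[
u_{p_1}\to v_{q_1}\to u_{p_2}\to v_{q_2}\to\cdots\to v_{q_k}\to u_{p_1},
\]
with $p_1=\min_i p_i$. Since $(u_{p_2},v_{q_2})$ is an edge of $G$ and $p_1\leq p_2$, the defining property of a Ferrers graph forces $(u_{p_1},v_{q_2})$ to also be an edge. Whatever its orientation, one extracts a strictly shorter directed cycle: the orientation $u_{p_1}\to v_{q_2}$ concatenates with $v_{q_2}\to u_{p_3}\to\cdots\to u_{p_1}$ into a $(2k-2)$-cycle, while the orientation $v_{q_2}\to u_{p_1}$ closes $u_{p_1}\to v_{q_1}\to u_{p_2}\to v_{q_2}\to u_{p_1}$ into a directed $4$-cycle. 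Either outcome contradicts the assumptions, forcing $k=2$; combined with the bijection in the preceding paragraph, this yields the chain of equalities $[w]=\#\{\text{acyclic orientations of }G\text{ with unique sink }u_r\}=\#\{\text{colorings satisfying (i)--(iii)}\}$.
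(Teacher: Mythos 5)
Your proposal is correct and follows exactly the route the paper takes (Greene--Zaslavsky's unique-sink interpretation of the linear coefficient, combined with Theorem~\ref{theorem_main}, plus the observation that an orientation of a Ferrers graph with no directed $4$-cycle is acyclic, translated into the red/blue language). The only difference is that you actually prove the no-directed-$4$-cycle observation via the minimal-cycle/Ferrers-closure argument, which the paper merely asserts; that argument is sound.
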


\section{The chromatic symmetric function}

A natural generalization of the chromatic polynomial,
known as the
\emph{chromatic symmetric function} was defined
in~\cite{Stanley_Chromatic}, and it is natural to ask
whether we can
explicitly compute these for Ferrers graphs. This
would give us a
set of symmetric functions other than the Schur
functions that can
be computed from Ferrers diagrams.

Observe that unlike the Schur functions, the chromatic
symmetric
functions of Ferrers graphs will not form a basis for
the
symmetric functions as the chromatic symmetric
function of the
Ferrers graph corresponding to the partition $\lambda$
and
$\lambda '$ will be identical.

Before we continue we need to define the constitution
of a Ferrers
diagram whose boxes have been colored red and blue.
First choose a
red box. Score through that row and column. For every
red box with
a score going through it in one direction score
through it in the
other direction. Repeat until all the red boxes either
have two
scores or no scores through them. Extract all the
boxes with two
scores in them. Choose another red box, and repeat
until none
remain. The list of extractions is the
\emph{constitution} and
each extraction is called a \emph{constituent}.

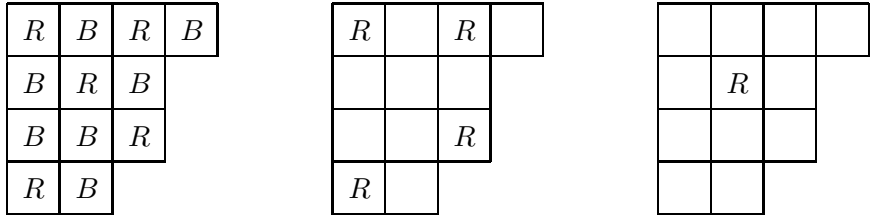
\begin{figure}
\begin{center}
\setlength{\unitlength}{0.7mm}
\begin{picture}(60,40)(-10,-5)

   \put(0,0){\line(1,0){20}}
   \put(0,10){\line(1,0){30}}
   \put(0,20){\line(1,0){30}}
   \put(0,30){\line(1,0){40}}
   \put(0,40){\line(1,0){40}}

   \put(0,40){\line(0,-1){40}}
   \put(10,40){\line(0,-1){40}}
   \put(20,40){\line(0,-1){40}}
   \put(30,40){\line(0,-1){30}}
   \put(40,40){\line(0,-1){10}}

   \put(3,3){$R$}
   \put(13,3){$B$}

   \put(3,13){$B$}
   \put(13,13){$B$}
   \put(23,13){$R$}

   \put(3,23){$B$}
   \put(13,23){$R$}
   \put(23,23){$B$}

   \put(3,33){$R$}
   \put(13,33){$B$}
   \put(23,33){$R$}
   \put(33,33){$B$}

\end{picture}
\begin{picture}(60,40)(-10,-5)

   \put(0,0){\line(1,0){20}}
   \put(0,10){\line(1,0){30}}
   \put(0,20){\line(1,0){30}}
   \put(0,30){\line(1,0){40}}
   \put(0,40){\line(1,0){40}}

   \put(0,40){\line(0,-1){40}}
   \put(10,40){\line(0,-1){40}}
   \put(20,40){\line(0,-1){40}}
   \put(30,40){\line(0,-1){30}}
   \put(40,40){\line(0,-1){10}}

   \put(3,3){$R$}

   \put(23,13){$R$}

   \put(3,33){$R$}
   \put(23,33){$R$}

\end{picture}
\begin{picture}(60,40)(-10,-5)

   \put(0,0){\line(1,0){20}}
   \put(0,10){\line(1,0){30}}
   \put(0,20){\line(1,0){30}}
   \put(0,30){\line(1,0){40}}
   \put(0,40){\line(1,0){40}}

   \put(0,40){\line(0,-1){40}}
   \put(10,40){\line(0,-1){40}}
   \put(20,40){\line(0,-1){40}}
   \put(30,40){\line(0,-1){30}}
   \put(40,40){\line(0,-1){10}}

   \put(13,23){$R$}

\end{picture}
\end{center}
\caption{A Ferrers diagram with colored boxes, and its
constituents.} \label{figure_two}
\end{figure}

In addition, let $RB_{\lambda}$ be the set of all red-blue
colorings  of the Ferrers diagram corresponding to the partition
$\lambda$ (without the restriction of
Corollary~\ref{corollary_coloring}).
For $r\in RB_\lambda$ let $|r|$ be the
number of constituents of $r$ and $|r|_{red}$ be the number of
boxes in $r$ colored red.

\begin{theorem}\label{chromsymm}
Let $G$ be the Ferrers graph corresponding to the
partition
$\lambda$. Then the chromatic symmetric function $X_G$
in terms of
the power sum symmetric functions $p_{\mu}$ is given
by:
$$X_G=\sum _{r\in RB_\lambda} (-1)^{|r|_{red}}
        \cdot
    p_{r_1}\cdot p_{r_2} \cdots p_{r_{|r|}}
\cdot p_{1}^b   ,  $$ where $r_i$ is the number of
rows plus the
number of columns in the $i$th constituent of $r$,
$1\leq i \leq
|r|$ and $b$ is the number of rows plus the number of
columns of
$r$ that contain no red boxes.
\end{theorem}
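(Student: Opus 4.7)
The plan is to invoke Stanley's expansion of the chromatic symmetric function in the power sum basis, namely
$$X_G = \sum_{S \subseteq E(G)} (-1)^{|S|} \cdot p_{\lambda(S)},$$
where $\lambda(S)$ is the partition of $|V(G)|$ whose parts are the sizes of the connected components of the spanning subgraph $(V(G),S)$. Since the edges of a Ferrers graph $G$ are in natural bijection with the boxes of its Ferrers diagram, specifying a subset $S \subseteq E(G)$ is the same as specifying a red/blue coloring $r \in RB_{\lambda}$, where the red boxes are exactly those in $S$. Under this bijection the sign $(-1)^{|S|}$ becomes $(-1)^{|r|_{red}}$, so the problem reduces to identifying the partition $\lambda(S)$ combinatorially in terms of the coloring $r$.

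The heart of the argument is to show that the constitution procedure computes the connected components of the bipartite subgraph of red edges. In this subgraph, $u_i$ and $v_j$ lie in the same component precisely when there is a chain of red boxes linking row $i$ to column $j$ by alternately sharing rows and columns. This is exactly what the scoring algorithm performs: starting from a red box $(i_{0},j_{0})$ we score row $i_{0}$ and column $j_{0}$, and whenever a red box lies in a scored row we score its column and vice versa. The red boxes that eventually receive two scores are the red edges of a single connected component, and the rows and columns involved are precisely the $u$-vertices and $v$-vertices lying in that component. Hence the $i$th constituent contributes to $\lambda(S)$ a single part of size $r_{i}$, the number of its rows plus the number of its columns. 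The remaining vertices of $G$, namely those $u_i$ whose row contains no red box and those $v_j$ whose column contains no red box, are isolated in $(V(G),S)$ and account for the $b$ parts equal to $1$.

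Assembling these observations gives $\lambda(S) = (r_{1},\ldots,r_{|r|},1^{b})$, so
$$p_{\lambda(S)} = p_{r_{1}} \cdot p_{r_{2}} \cdots p_{r_{|r|}} \cdot p_{1}^{b}.$$
Substituting into Stanley's formula and re-indexing the sum over $RB_{\lambda}$ yields the claimed identity. A sanity check is that the sum of part sizes comes out right: the $r_i$ account for every row and every column that contains at least one red box, and $b$ accounts for the rest, giving $(n+1)+(m+1) = |V(G)|$. The main obstacle, though not a deep one, is the careful verification that the iterative scoring coincides with bipartite-graph closure under red-box adjacency, so that the count "rows plus columns in a constituent" agrees with the number of vertices in the corresponding connected component. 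Once this bookkeeping is in place, the theorem is immediate from Stanley's expansion.
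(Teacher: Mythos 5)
Your proposal is correct and follows essentially the same route as the paper's own proof: invoke Stanley's power-sum expansion over edge subsets, identify subsets with red-blue colorings of the Ferrers diagram, and match constituents with nontrivial connected components (and red-free rows/columns with isolated vertices). You simply spell out in more detail why the scoring procedure computes the connected components, which the paper asserts without elaboration.
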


\begin{proof}
Recall that for a graph $G$ with a set of edges $E$ the definition
of the chromatic symmetric function in terms of the power sum
basis is~\cite[Theorem 2.5]{Stanley_Chromatic}
$$X_G=\sum _{S\subseteq E} (-1)^{|S|}
p_{|C_0|} \cdots p_{|C_m|}$$ where $|C_i|$ is the number of
vertices in each connected component $C_i, 0\leq i\leq m$ of $G$
with the edges not in $S$ removed.

Now observe that for a Ferrers graph $G$ with edge set
$E$ there
is an natural bijection between $S\subseteq E$ and
red-blue
colorings $r\in RB_\lambda$ of the Ferrers diagram
associated with
$\lambda$,   given by
$$(u_i,v_j)\in S \Leftrightarrow (i,j) \mbox{ is
colored red in }r.$$ This gives us the index of
summation and the
exponent of $-1$ in our formula. To complete the proof
note the
constituents of $r$ yield precisely the connected
components of
$G$ containing more than one vertex, and if the $i$th
row (column)
of $r$ contains only blue boxes then $u_i$ ($v_i$) is
not
connected to any other vertex in $G$.
\end{proof}

A more specific formula can be found for the two
extreme cases of
Ferrers graphs. First the case when the Ferrers graph
is a tree.

\begin{corollary}\label{hook_chromsymm}
Let $G$ be the Ferrers graph corresponding to the
partition
$(m+1)1^n$. Then the chromatic symmetric function
$X_G$ in terms
of the power sum symmetric functions $p_{\mu}$ is
given by:
$$
  X_G
   =
  \sum _{i=0}^{m+n}
      (-1)^i\left( \left(\sum_{j+k=i} {m\choose j}{n
\choose k} p_{j+1} p_{k+1} p_{1}^{m+n-i}\right)
-{m+n\choose i}
p_{i+2} p_{1}^{m+n-i}\right).$$
\end{corollary}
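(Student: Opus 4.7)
The plan is to apply Theorem~\ref{chromsymm} directly to the hook partition $\lambda = (m+1)1^n$. Its Ferrers diagram consists of the boxes $(0,0),(0,1),\ldots,(0,m)$ in row~$0$ together with $(1,0),(2,0),\ldots,(n,0)$ in column~$0$, so every red--blue coloring $r \in RB_\lambda$ is specified by three pieces of data: an indicator $\epsilon \in \{0,1\}$ for whether the corner $(0,0)$ is red; a size-$j$ subset of $\{(0,1),\ldots,(0,m)\}$ colored red, giving ${m \choose j}$ choices; and a size-$k$ subset of $\{(1,0),\ldots,(n,0)\}$ colored red, giving ${n \choose k}$ choices. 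In particular $|r|_{red} = \epsilon + j + k$, so the sign attached to $r$ is $(-1)^{\epsilon+j+k}$.

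Next I would work out the constituents and the blue count~$b$ in each case. When $\epsilon=1$ the scoring process starts at the corner and reaches every red row and every red column, producing a single constituent of row-plus-column-count $2+j+k$. When $\epsilon=0$ the $j$ red boxes of row~$0$ and the $k$ red boxes of column~$0$ form two separate constituents of sizes $1+j$ and $1+k$, because the only box shared between row~$0$ and column~$0$ is the now-blue corner. For~$b$, rows $1,\ldots,n$ and columns $1,\ldots,m$ each contain exactly one box, so they supply $n-k$ and $m-j$ blue lines; row~$0$ (resp.\ column~$0$) is additionally blue precisely when $\epsilon=0$ and $j=0$ (resp.\ $k=0$).

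The crucial observation is that in the $\epsilon=0$ case the extra $p_1$ arising from an all-blue row~$0$ or column~$0$ exactly replaces the $p_{j+1}$ or $p_{k+1}$ that would have come from the vanishing constituent, so every $\epsilon=0$ coloring contributes uniformly $(-1)^{j+k}{m \choose j}{n \choose k}\, p_{j+1}\, p_{k+1}\, p_1^{m+n-j-k}$. Summing over $\epsilon \in \{0,1\}$ and regrouping by $i = j+k$ then gives
$$X_G = \sum_{i=0}^{m+n}(-1)^i \left(\sum_{j+k=i}{m \choose j}{n \choose k} p_{j+1} p_{k+1} \; - \; \sum_{j+k=i}{m \choose j}{n \choose k} p_{i+2}\right) p_1^{m+n-i},$$
and applying Vandermonde's identity $\sum_{j+k=i}{m \choose j}{n \choose k} = {m+n \choose i}$ to the second inner sum produces the formula stated in the corollary. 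The only subtle step is the uniform rewriting in the $\epsilon=0$ case, a small bookkeeping check that the degeneracies $j=0$ or $k=0$ cancel precisely with the newly blue row or column; everything else reduces to Vandermonde.
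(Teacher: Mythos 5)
Your proposal is correct and follows essentially the same route as the paper's own (much terser) proof: both split the colorings according to whether the corner box $(0,0)$ is red or blue, apply Theorem~\ref{chromsymm} to each case, and collect terms via Vandermonde's identity. Your bookkeeping of the degenerate cases $j=0$ or $k=0$ (where the would-be constituent collapses to an isolated vertex contributing $p_1$) is exactly the detail the paper leaves implicit, and it checks out.
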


\begin{proof_special}
Observe that in the case where the Ferrers diagram
associated with
$\lambda$ is a hook, for $r\in RB_\lambda$ if $(0,0)$
is blue then
we obtain the the function
$$\sum _{i=0}^{m+n}  (-1)^i\sum_{j+k=i} {m\choose j}{n
\choose k} p_{j+1} p_{k+1} p_{1}^{m+n-i}$$
whereas if it is red then
we obtain the function
$$
\hspace*{5 mm} \hspace*{48 mm} \sum _{i=1}^{m+n+1}
(-1)^i {m+n
\choose i-1} p_{i+1} p_{1}^{m+n+1-i}. \hspace*{48 mm}
\qed $$
\end{proof_special}

The other extreme case is the complete bipartite graph
$K_{n,m}$,
which is the Ferrers graph associated with the
partition $m^{n}$.
A change of basis is required for the simplest
description of the
chromatic symmetric function.

\begin{corollary}
The chromatic symmetric function $X_{K_{n,m}}$
in terms of
the monomial symmetric functions $m_{\mu}$ is given
by:
$$ X_{K_{n,m}}
  =
 \sum_{\sigma \in \Pi_{n}}
 \sum_{\tau   \in \Pi_{m}}
       (r_1!r_2! \cdots )
     \cdot
       m_{\mu (\sigma , \tau)}    ,  $$
where $\Pi_{n}$ is the collection of all set
partitions of
$\{1, \ldots, n\}$,
$\mu(\sigma ,\tau)$ is the partition determined by the
block sizes
of $\sigma$ and $\tau$, and $r_i$ is the multiplicity
of $i$ in
$\mu(\sigma, \tau)$.
\label{Kmn_chromsymm}
\end{corollary}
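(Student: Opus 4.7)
The plan is to apply the definition of the chromatic symmetric function directly. Recall from~\cite{Stanley_Chromatic} that $X_G = \sum_{\kappa} \prod_{v \in V(G)} x_{\kappa(v)}$, where $\kappa$ ranges over proper colorings of $G$ by positive integers. I would first parametrize proper colorings of $K_{n,m}$ by triples consisting of a set partition $\sigma \in \Pi_{n}$ of the $u$-vertices (grouping them by color class), a set partition $\tau \in \Pi_{m}$ of the $v$-vertices (grouping them by color class), together with an assignment of distinct positive-integer colors to the combined list of blocks of $\sigma$ and $\tau$. Distinctness across sides is forced because every $u$-vertex is adjacent to every $v$-vertex in $K_{n,m}$, and distinctness within each side is automatic since the blocks are precisely the monochromatic classes.

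Next, I would fix $(\sigma,\tau)$, let the concatenated list of block sizes be $\mu_{1},\ldots,\mu_{\ell}$ (so the underlying multiset forms the partition $\mu = \mu(\sigma,\tau)$, with $\ell$ the total number of blocks), and observe that the contribution of this fixed pair to $X_{K_{n,m}}$ is exactly
$$
   \sum_{c_{1},\ldots,c_{\ell} \text{ distinct}}
   x_{c_{1}}^{\mu_{1}} \, x_{c_{2}}^{\mu_{2}} \cdots x_{c_{\ell}}^{\mu_{\ell}}.
$$
Then I would invoke the standard identity expressing this ordered sum over distinct indices as the augmented monomial symmetric function $(r_{1}! \, r_{2}! \cdots) \cdot m_{\mu}$: each distinct monomial appearing in $m_{\mu}$ corresponds to an unordered assignment of colors to the multiset of exponents $\{\mu_{i}\}$, and is produced by the ordered sum exactly $r_{1}! \, r_{2}! \cdots$ times, once for each permutation of equal parts.

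Finally, summing over all pairs $(\sigma,\tau) \in \Pi_{n} \times \Pi_{m}$ gives the claimed formula. The only real point to be careful about, and therefore the main obstacle to state cleanly rather than to prove, is the global distinctness of the $|\sigma|+|\tau|$ colors assigned to blocks on both sides simultaneously; once this is verified by the complete bipartite structure of $K_{n,m}$, the factor $r_{1}! \, r_{2}! \cdots$ emerges directly from the augmented monomial symmetric function identity and the corollary follows.
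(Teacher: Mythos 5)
Your proof is correct and follows essentially the same route as the paper: the paper simply invokes Proposition~2.4 of Stanley's chromatic symmetric function paper, which expands $X_G$ over stable partitions as $\sum_{\pi} (r_1! r_2! \cdots)\, m_{\mu(\pi)}$, and then notes that the stable partitions of $K_{n,m}$ are exactly the pairs $(\sigma,\tau) \in \Pi_n \times \Pi_m$. You re-derive that expansion from the definition of $X_G$ by grouping proper colorings according to their color classes and identifying the resulting sum over distinct colors with the augmented monomial symmetric function; the key observation --- that every monochromatic class must lie entirely on one side of $K_{n,m}$ --- is the same in both arguments.
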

\begin{proof}
Recall that a stable partition of the vertices of a
graph $G$ is a
partition of the vertices such that each block is
totally
disconnected.
Then Proposition~2.4 in~\cite{Stanley_Chromatic}
states
$$ X_G = \sum_{\pi}
            \left( r_1!r_2! \cdots \right)
             m_{\mu (\pi)}             ,
$$
where the sum ranges over all stable partitions $\pi$
of the graph
$G$. The result follows by noting that in the complete
bipartite
graph $K_{n,m}$, every block in a stable partition
either lies
entirely in the $n$ vertices $\{u_0, \ldots,
u_{n-1}\}$ or lies
entirely in the $m$ vertices $\{v_0, \ldots,
v_{m-1}\}$.
\end{proof}

The symmetric functions appearing in
Corollary~\ref{Kmn_chromsymm}
have the following explicit exponential generating
function,
generalizing Exercise~5.6 in~\cite{Stanley_EC_II}:
$$    \sum_{n,m \geq 0} X_{K_{n,m}}
              \frac{s^{n}}{n!} \frac{t^{m}}{m!}
   =
       \prod_{i \geq 1} \left( e^{s x_i} + e^{t x_i} -
1 \right) ,  $$ where we view the symmetric functions
in terms of
the variables $\{x_i\}_{i \geq 1}$.

Lastly, note that to recover the earlier chromatic polynomial we
set $x_1=\ldots = x_t=1$ and all other $x_i=0$.

\section{Concluding remarks}

Is it possible to obtain an expression for the Tutte polynomial of a
Ferrers graph, that would both encode the number of spanning trees in
Theorem~\ref{theorem_spanning_trees} and the chromatic polynomial in
Theorem~\ref{theorem_chromatic}? For the enumerative results in this
paper it is natural to ask for combinatorial proofs. From a bijection
given in~\cite{Remmel_Williamson}, a bijective proof for
Theorem~\ref{theorem_spanning_trees} can be obtained via some
modifications. In~\cite{Burns} bijective proofs for
Theorems~\ref{theorem_spanning_trees}
and~\ref{theorem_Hamiltonian_paths} have been derived using box
labeling. However, it would also be desirable to have a bijective
proof for Corollary~\ref{corollary_coloring}.

The excedance set statistic $[w]$ satisfies the
recursion
$[u\bv\av v]
   =
 [u \av\bv v]
   +
 [u \av v]
   +
 [u \bv v]$
where $u$ and $v$ are two $\ab$-words. Is there a similar
recursion for the chromatic polynomial? A partial answer to this
question is the following proposition, whose proof we omit.
\begin{proposition}
The chromatic polynomial $\chi(w)$ of the associated
Ferrers graph
satisfies the recursion:
$$
\chi(w \bv \av^{k-1})
  =
t \cdot \chi(w \av^{k-1}) +
      \sum_{0 \leq i \leq k-1}
         (-1)^{k-i} \cdot {k \choose i} \cdot \chi(w
\av^{i})   .
$$
\end{proposition}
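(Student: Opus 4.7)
The plan is to apply Theorem~\ref{theorem_chromatic} to both sides and reduce the identity to a binomial identity in one variable. Write $w = \av^{n_0} \bv \av^{n_1} \bv \cdots \bv \av^{n_m}$ and, for $\rv \in R_m$, set
$$ A(\rv) = t \cdot \prod_{i=1}^{m} f_i(\rv) \cdot \prod_{i=0}^{m} (t - r_i)^{n_i} . $$
Since forming $w \av^{j}$ from $w$ only replaces $n_m$ by $n_m + j$ (without changing $m$ or the $f_i$'s), Theorem~\ref{theorem_chromatic} immediately yields $\chi(w \av^{j}) = \sum_{\rv \in R_m} A(\rv) \cdot (t - r_m)^{j+1}$ for every $j \geq 0$.

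For the left-hand side, the word $w \bv \av^{k-1}$ has $m+1$ $\bv$'s with exponent vector $(n_0, \ldots, n_m, k-1)$, so Theorem~\ref{theorem_chromatic} sums over $\rv' \in R_{m+1}$. I would split this sum according to the last step: if $r'_{m+1} = r'_m$ then $f'_{m+1}(\rv') = r'_m$, while if $r'_{m+1} = r'_m + 1$ then $f'_{m+1}(\rv') = t - r'_m$. Writing $\rv = (r'_0, \ldots, r'_m) \in R_m$, the factors $t$, $f'_1, \ldots, f'_m$, and $(t-r'_i)^{n_i}$ for $i \leq m$ all collect into $A(\rv)$, while the new factor $f'_{m+1} \cdot (t - r'_{m+1})^{k}$ contributes either $r_m (t-r_m)^{k}$ or $(t-r_m)(t-r_m-1)^{k}$. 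Hence
$$ \chi(w \bv \av^{k-1}) = \sum_{\rv \in R_m} A(\rv) \bigl[\, r_m (t - r_m)^{k} + (t - r_m)(t - r_m - 1)^{k} \,\bigr] . $$

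Using the formula for $\chi(w \av^{j})$ above, the right-hand side of the proposition unfolds to $\sum_{\rv \in R_m} A(\rv) \bigl[\, t (t - r_m)^{k} + (t - r_m) \sum_{i=0}^{k-1} (-1)^{k-i} \binom{k}{i} (t - r_m)^{i} \,\bigr]$. Matching the two expressions term-by-term in $\rv$ and setting $x = t - r_m$, the claim reduces to
$$ (t - x) x^{k} + x (x - 1)^{k} = t x^{k} + x \sum_{i=0}^{k-1} (-1)^{k-i} \binom{k}{i} x^{i} , $$
which, after cancelling $t x^{k}$ from both sides and dividing by $x$, is exactly $(x - 1)^{k} - x^{k} = \sum_{i=0}^{k-1} (-1)^{k-i} \binom{k}{i} x^{i}$, i.e.\ the binomial expansion of $(x-1)^k$ with its leading term moved across. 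The main obstacle is purely notational: keeping the indexing straight when extending from $\rv \in R_m$ to $\rv' \in R_{m+1}$ and correctly evaluating $f'_{m+1}$ in the two cases. Once the common factor $A(\rv)$ is pulled out, the remainder is routine algebra.
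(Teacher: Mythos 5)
Your argument is correct. Note that the paper explicitly omits its own proof of this proposition, so there is nothing to compare against; your derivation via Theorem~\ref{theorem_chromatic} is a perfectly sound way to fill that gap. The key observations all check out: appending $\av^{j}$ to $w$ only changes the final exponent $n_m+1$ to $n_m+j+1$ in each term of the sum over $R_m$; each $\rv'\in R_{m+1}$ restricts to a unique $\rv\in R_m$ and each $\rv$ extends in exactly two ways, with $f'_{m+1}$ equal to $r_m$ or $t-r_m$ accordingly, while $f'_1,\ldots,f'_m$ agree with $f_1,\ldots,f_m$ since each depends only on consecutive entries of $\rv$; and the resulting termwise identity in $x=t-r_m$ is precisely the binomial expansion of $(x-1)^k$ with the leading term $x^k$ moved to the other side. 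Matching termwise in $\rv$ is of course only a sufficient condition for equality of the sums, but since the identity does hold for every $\rv$, the proof is complete.
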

On the excedance statistic level this recursion
corresponds to $[w
\bv \av^{k-1}]
  =
  \sum_{0 \leq i \leq k-1}
    {k \choose i} \cdot [w \av^{i}]$;
see~\cite[Proposition~2.5]{Ehrenborg_Steingrimsson}.
Moreover, can
this proposition be extended to the chromatic
symmetric function?

Another question related to the chromatic polynomial
arises from
the following observation. A Ferrers graph $G$ can be
equivalently
viewed as an $(n+m+2)$-dimensional hyperplane
arrangement given by
$$
      x_{i} = y_{j}
              \:\:\:\:
          \mbox{ if and only if }
              \:\:\:\:
          \mbox{ $(u_{i},v_{j})$ is an edge in $G$. }
$$
Thus the chromatic polynomial of the Ferrers graph $G$
is also the
characteristic polynomial of the associated hyperplane
arrangement, see~\cite{Greene_Zaslavsky}. Hence, can a
combinatorial expression be found for the number of
acyclic
orientations of the Ferrers graph, or equivalently for
the number
of regions of the associated hyperplane arrangement?

Finally, one can define the Ferrers graph associated
with a skew
partition $\lambda/\mu$. Do any of the results in this
paper
extend naturally to skew partitions?

\section*{Acknowledgments}

The authors thank Margaret Readdy, Tom Zaslavsky and the two
referees for their comments on earlier drafts of this paper.

\newcommand{\journal}[6]{{\sc #1,} #2, {\it #3} {\bf
#4} (#5), #6.}
\newcommand{\preprint}[3]{{\sc #1,} #2, preprint #3.}
\newcommand{\book}[4]{{\sc #1,} ``#2,'' #3, #4.}
\newcommand{\JCTA}{J.\ Combin.\ Theory Ser.\ A}
\newcommand{\JCTB}{J.\ Combin.\ Theory Ser.\ B}
\newcommand{\AdvancesinMathematics}{Adv.\ Math.}
\newcommand{\JournalofAlgebraicCombinatorics}{J.\ Algebraic Combin.}

{\small

}

\noindent {\small {\em  R.\ Ehrenborg,
      Department of Mathematics,
      University of Kentucky,
      Lexington, KY 40506-0027,
      USA. Email: jrge@ms.uky.edu
} }

\noindent {\small {\em  S.\ van Willigenburg,
      Department of Mathematics,
      University of British Columbia,
      1984 Mathematics Road,
      Vancouver, BC, V6T 1Z2,
      Canada. Email: steph@math.ubc.ca
} }


\begin{thebibliography}{99}


\bibitem{Bayer_Billera}
\journal{M.\ M.\ Bayer and L.\ J.\ Billera}
        {Generalized Dehn-Sommerville relations for polytopes, spheres and
         Eulerian partially ordered sets}
        {Invent.\ Math.}
        {79}{1985}{143--157}

\bibitem{Billera_Brown_Diaconis}
\journal{L.\ J.\ Billera, K.\ S.\ Brown and P.\ Diaconis}
        {Random walks and plane arrangements in three dimensions}
        {Amer.\ Math.\ Monthly}
        {6}{1999}{502--524}


\bibitem{Billera_Chan_Liu}
\journal{L.\ J.\ Billera, C.\ S.\ Chan and N.\ Liu}
        {Flag complexes, labelled rooted trees, and star shellings}
        {Contemp.\ Math.}
        {223}{1999}{91--102}

\bibitem{Billera_Ehrenborg}
\journal{L.\ J.\ Billera and R.\ Ehrenborg}
        {Monotonicity of the $\mathbf{cd}$-index for polytopes}
        {Math.\ Z.}
        {233}{2000}{421--441}

\bibitem{Billera_Ehrenborg_Readdy}
\journal{L.\ J.\ Billera, R.\ Ehrenborg and M.\ Readdy}
        {The {\bf c}-2{\bf d}-index of oriented matroids}
        {\JCTA}
        {80}{1997}{79--105}


\bibitem{Billera_Filliman_Sturmfels}
\journal{L.\ J.\ Billera, P.\ Filliman and B.\ Sturmfels}
        {Constructions and complexity of secondary polytopes}
        {\AdvancesinMathematics}
        {83}{1990}{155--179}



\bibitem{Billera_Hsiao_van_Willigenburg}
\journal{L.\ J.\ Billera, S.\ K.\ Hsiao and S.\ van\ Willigenburg}
        {Peak quasisymmetric functions and Eulerian enumeration}
        {\AdvancesinMathematics}
        {176}{2003}{248--276}

\bibitem{Billera_Sarangarajan}
\journal{L.\ J.\ Billera and A.\ Sarangarajan}
        {The combinatorics of permutation polytopes}
        {Discrete Math.\ Theoret.\ Comput.\ Sci.}
        {24}{1996}{1--23}

\bibitem{Bollobas}
\book{B.\ Bollob\'as}
     {Modern Graph Theory}
     {Springer-Verlag}
     {New York, 1998}



\bibitem{Burns}
{\sc J.\ Burns,}
{Bijective proofs for ``Enumerative properties of Ferrers graphs''}
preprint 2003, 6pp, \\
{\verb+http://arXiv.org/abs/math.CO/0312282+}.


\bibitem{Ehrenborg_Steingrimsson}
\journal{R.\ Ehrenborg and E. Steingr\'{\i}msson}
        {The excedance set of a permutation}
        {Adv.\ in Appl.\ Math.}
        {24}{2000}{284--299}


\bibitem{Ehrenborg_Steingrimsson_2}
\journal{R.\ Ehrenborg and E. Steingr\'{\i}msson}
        {Yet another triangle for the Genocchi
numbers}
        {European J.\ Combin.}
        {21}{2000}{593--600}

\bibitem{Frumkin_James_Roichman}
\journal{A.\ Frumkin, G.\ James and Y.\ Roichman}
        {On trees and characters}
        {J.\ Alg.\ Comb.}
        {17}{2003}{323--334}


\bibitem{Greene_Zaslavsky}
\journal{C.\ Greene and T.\ Zaslavsky}
        {On the interpretation of Whitney numbers
         through arrangements of hyperplanes,
zonotopes,
         non-Radon partitions, and orientations of
graphs}
        {Trans.\ Amer.\ Math.\ Soc.}
        {280}{1983}{97--126}



\bibitem{Joyal}
\journal{A.\ Joyal}
        {Une th\'eorie combinatoire des s\'eries formelles}
        {\AdvancesinMathematics}
        {42}{1981}{1--82}

\bibitem{Kirchhoff}
\journal{G.\ Kirchhoff}
        {\"Uber die Aufl\"osung der Gleichungen,
         auf welche man bei der
         Untersuchung der linearen Verteilung
         galvanischer Str\"ome gef\"uhrt wird}
        {Ann.\ Phys.\ Chem.}
        {72}{1847}{497--508}


\bibitem{Remmel_Williamson}
\journal{J.\ B.\ Remmel and S.\ G.\ Williamson}
        {Spanning trees and function classes}
        {Electron.\ J.\ Combin.}
        {9}{2002}{R34, 24pp}


\bibitem{Stanley_Chromatic}
\journal{R.\ Stanley}
        {A symmetric function generalization of
         the chromatic polynomial of a graph}
        {\AdvancesinMathematics}
        {111}{1995}{166--194}


\bibitem{Stanley_EC_II}
     {\sc R.\ Stanley,}
     {``Enumerative Combinatorics,'' Vol. II,}
     {Cambridge University Press, Cambridge}
     {1999.}



\end{thebibliography}
\end{document}